\theoremstyle{plain}
\newtheorem{thm}{Theorem}
\newtheorem*{thm*}{Theorem}
\theoremstyle{definition}
\newcommand{\R}{\mathbb{R}}
\DeclareMathOperator{\conv}{conv}
\begin{document}

\title{Achieving rental harmony with a secretive roommate}



\author{Florian Frick \and Kelsey Houston-Edwards \and Fr\'ed\'eric Meunier}

\address[FF, KHE]{Department of Mathematics, Cornell University, Ithaca, NY 14853, USA}
\email{\{ff238,kah282\}@cornell.edu}

\address[FM]{Universit\'e Paris Est, CERMICS, 77455 Marne-la-Vall\'ee CEDEX, France}
\email{frederic.meunier@enpc.fr}

\date{\today}
\maketitle


\begin{abstract}
\small
Given the subjective preferences of $n$ roommates in an $n$-bedroom apartment, one can use Sperner's lemma to find a division of the rent such that each roommate is content with a distinct room. At the given price distribution, no roommate has a strictly stronger preference for a different room. We give a new elementary proof that the subjective preferences of only $n-1$ of the roommates actually suffice to achieve this envy-free rent division. Our proof, in particular, yields an algorithm to find such a fair division of rent. The techniques also give generalizations of Sperner's lemma including a new proof of a conjecture of the third author.
\end{abstract}

\section{Introduction}

As the rent in Larry's new two-bedroom apartment is \$1000 he will have to look for a roommate to split the cost.
The two rooms are not the same size, and each has their own advantages, yet fairly dividing the rent between
the rooms is Larry's primary concern. Moe is interested in moving in with Larry, and Larry feels that splitting the rent
\$600 -- \$400 between the two rooms is fair --- for this division of the rent the disadvantages of the second room
are offset by its reduced cost. Now, when Larry offers the two rooms to Moe at these prices, it will not matter to
him which room Moe chooses; Larry will be content with the other room. The two new roommates will not be envious
of one another and live in a state of \emph{rental harmony}. Larry accomplished this envy-free rent division without 
taking Moe's preferences into account.

This is not a lucky accident of the two person -- two bedroom situation: for a three-bedroom apartment Larry and Moe
can fairly divide the rent among the rooms without taking the preferences of a third roommate, Curly, into account. There is
a division of the total rent such that Curly can decide on an arbitrary room, and this will leave Larry and Moe with
sufficiently many options to accomplish rental harmony among the three of them. In general, it suffices if $n-1$ roommates 
know each others preferences to fairly divide the rent of an $n$-bedroom apartment. We give an algorithm for producing such a fair division of rent; see Asada et. al.~\cite{asada2017} for the recent 
nonconstructive topological proof of this result.

That rental harmony can always be achieved in an $n$-bedroom apartment (under mild conditions) if the subjective 
preferences of all $n$ future roommates are known was shown by Su~\cite{su1999}, partially reporting on work of Simmons.
The proof uses a combinatorial-geometric lemma about labelings of simplices due to Sperner~\cite{sperner1928}. This makes the proof,
especially for low~$n$, accessible to a nonexpert audience. Here our goal is to adapt Su's arguments for $n=3$ and then give a separate elementary proof of the existence of a fair division of rent for $n$ roommates, where the preferences of one roommate are unknown.

We first recall the mild conditions stipulated by Su to guarantee the existence of an envy-free rent division:
\begin{compactenum}[1.]
	\item In any division of the rent, each tenant finds at least one room acceptable.
	\item Each person prefers a room which costs no rent (i.e., a free room) to a non-free room.
	\item If a person prefers a room for a convergent sequence of prices, then that person also prefers the room for the limiting price.
\end{compactenum}

\noindent
Here we will slightly change condition 3 to simplify the proof and eliminate the need to take limits: 
The roommates do not care about one-cent error margins. We remark that Su's second condition
together with the third condition imply that the roommates are indifferent among free rooms, that is, if multiple rooms are free then
each roommate is content with any of them. This is because for any division of the rent where multiple
rooms are free, there is always a sequence of prices converging to this rent division where only one
specific room is free. And thus, we will also assume that the roommates are indifferent among free rooms.
In summary, our conditions are:
\begin{compactenum}[1.]
	\item In any division of the rent, each tenant finds at least one room acceptable.
	\item Each person prefers a room which costs no rent (i.e., a free room) to a non-free room, and each person is indifferent among free rooms.
	\item The roommates do not care about one-cent error margins.
\end{compactenum}

\medskip
\noindent 
Under these conditions, we can give a new, elementary (and now constructive) proof of our main theorem:

\begin{thm}
\label{thm:secret}
	For an $n$-bedroom apartment it is sufficient to know the subjective preferences of $n-1$ roommates to
	find an envy-free division of rent.
\end{thm}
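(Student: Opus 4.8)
The plan is to reduce Theorem~\ref{thm:secret} to finding a \emph{single} rent division at which a Hall-type condition with one unit of slack holds, and then to obtain that division from a short Sperner/KKM-type counting argument. Model the rent divisions as the points of $\Delta=\{p\in\R^n_{\ge 0}:\sum_{i=1}^n p_i=r\}$, where $r>0$ is the total rent; write $[n]=\{1,\dots,n\}$ and, for a known roommate $j\in[n-1]$, let $A_j(p)\subseteq[n]$ be the (nonempty, by condition~1) set of rooms $j$ is content with at $p$. First I would check that it suffices to produce a $p$ with
\[
\Big|\,\bigcup_{j\in S}A_j(p)\,\Big|\ \ge\ |S|+1\qquad\text{for every nonempty }S\subseteq[n-1].
\]
Given such a $p$ and any room $\rho$, the bipartite graph joining each $j\in[n-1]$ to every room of $A_j(p)\setminus\{\rho\}$ satisfies Hall's condition on the side $[n-1]$ (each nonempty $S$ still sees at least $|S|$ rooms, since deleting $\rho$ drops the above union by at most one), so the known roommates can be placed in distinct rooms other than $\rho$, each in a room they are content with. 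To finish, I would consult the secretive roommate: by condition~1 there is a room $\rho^\ast$ they are content with at $p$; give them $\rho^\ast$ and use the placement built for $\rho=\rho^\ast$. The result is an assignment of the $n$ roommates to distinct rooms, each content with their own room (the one-cent slack being harmless by condition~3), i.e.\ an envy-free division.

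To produce such a $p$ I would argue by contradiction. If no good $p$ exists, then every $p$ is ``trapped'' by some nonempty proper $Q\subsetneq[n]$, meaning $|\{j\in[n-1]:A_j(p)\subseteq Q\}|\ge|Q|$; let $T_Q$ be the set of divisions trapped by $Q$, which is closed by condition~3 (and, after discretizing to the one-cent grid, is simply a subcomplex, so no limits are needed). By assumption the sets $T_Q$, $\varnothing\ne Q\subsetneq[n]$, cover $\Delta$. Condition~2 is essential here: on the facet $\{p_i=0\}$ every roommate is content with the free room $i$, so $A_j(p)\not\subseteq Q$ whenever $i\notin Q$, and hence $T_Q$ is disjoint from $\{p_i=0\}$ for all $i\notin Q$. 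After reindexing $Q\mapsto[n]\setminus Q$ this is exactly the hypothesis of the KKMS theorem, which produces a balanced family $\mathcal B$ of nonempty proper subsets of $[n]$, with weights $\mu_Q\ge0$ satisfying $\sum_{Q\in\mathcal B:\,i\in Q}\mu_Q=1$ for all $i$, and a point $p^\ast$ lying in $T_{[n]\setminus Q}$ for every $Q\in\mathcal B$; equivalently, for each such $Q$ at least $n-|Q|$ roommates $j$ have $A_j(p^\ast)\cap Q=\varnothing$. Then $\sum_{Q\in\mathcal B}\mu_Q\,(n-|Q|)=nM-n=n(M-1)$ where $M:=\sum_{Q\in\mathcal B}\mu_Q$ (using $\sum_{Q}\mu_Q|Q|=\sum_i\sum_{Q\ni i}\mu_Q=n$); on the other hand, since each $A_j(p^\ast)$ is nonempty, for each fixed $j$ the weights of the $Q\in\mathcal B$ with $A_j(p^\ast)\cap Q=\varnothing$ total at most $M-1$ (pick a room in $A_j(p^\ast)$; those $Q$ avoid it), so exchanging the order of summation gives $\sum_{Q\in\mathcal B}\mu_Q\,(n-|Q|)\le(n-1)(M-1)$. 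Hence $M\le1$; but $M\ge\sum_{Q\ni 1}\mu_Q=1$, so $M=1$, which forces every member of $\mathcal B$ to contain every index, i.e.\ to equal $[n]$ --- contradicting that $\mathcal B$ uses only proper subsets. So a good $p$ exists and Theorem~\ref{thm:secret} follows.

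The step I expect to be the real obstacle is the appeal to the KKMS theorem: the paper wants an \emph{elementary} and \emph{constructive} argument, whereas KKMS is usually proved topologically. So I would replace that appeal by a direct, elementary proof of the combinatorial lemma it rests on --- a statement about a labeled triangulation of $\Delta$ proved by a parity/pivoting argument in the spirit of the classical combinatorial proof of Sperner's lemma, following Su's treatment of the case $n=3$ as a warm-up before handling general $n$. The crux is the mismatch between the $n$ rooms and the $n-1$ known roommates: every elementary simplex has more vertices than there are roommates, so local data can never by itself certify the slack-one Hall condition (at some division a roommate's content set could be a single room), and one must instead propagate a global parity along a pivoting path whose terminal simplex --- read off together with condition~2's boundary behaviour --- encodes the sought rent division. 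Making that path close is precisely what turns the existence proof into an algorithm, and the same combinatorial lemma, suitably stated, is the source of the other generalizations of Sperner's lemma announced in the introduction, including the third author's conjecture.
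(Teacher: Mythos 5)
Your argument is essentially correct as an existence proof, but it reaches the key point by a genuinely different route than the paper. Your opening reduction is exactly how the paper finishes: the paper's small simplex $\tau$ containing the preimage of the barycenter plays the role of your $p$, the inner-product estimate $\langle \lambda(v), e_{j_1}+\dots+e_{j_k}\rangle \ge \frac{k}{n-1} > \frac{k}{n}$ certifies precisely your slack-one Hall condition (any $k$ known roommates collectively like at least $k+1$ rooms), and the paper also concludes with Hall's marriage theorem after the secretive roommate chooses. Where you diverge is in producing that point: the paper uses condition 2 and a cyclic relabeling of the vertices to make each roommate's preferences a Sperner labeling, so that each piecewise linear $\lambda_j$ --- and hence the average $\lambda = \frac{1}{n-1}(\lambda_1+\dots+\lambda_{n-1})$ --- maps every face of $\Delta_{n-1}$ into itself, and then finds $x$ with $\lambda(x)$ equal to the barycenter by an elementary path-following (trap-door) argument, which is what makes the whole proof self-contained and algorithmic. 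You instead invoke the KKMS theorem; your verification of the boundary/covering hypotheses via condition 2 and your balanced-family counting (using $\sum_Q \mu_Q |Q| = n$ against the bound $M-1$ per roommate, forcing $M=1$ and every positive-weight member to be all of $[n]$) are correct and quite elegant. What this buys is a slick derivation of the Hall-with-slack point without any relabeling trick; what it costs is reliance on a much heavier black box whose standard proofs are themselves topological, so the paper's stated goals of elementarity and constructivity are not met --- your final paragraph, which promises an elementary pivoting substitute for KKMS, is only a sketch and does not carry that out, whereas the paper's Section 5 supplies the needed path-following argument in full. One technical slip to fix: under the usual closedness-of-preferences condition the sets $\{p : A_j(p)\subseteq Q\}$ are open (complements of finite unions of closed preference sets), so your $T_Q$ are open rather than closed; you therefore need the open-cover version of KKMS, or the one-cent discretization you allude to, to make that step literally correct.
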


In Section~\ref{sec:sperner} we recall Sperner's lemma and provide two proofs. The first proof is based on a classical ``trap-door'' argument and the second proof introduces a piecewise linear map which is used in the proof of our main theorem. Section~\ref{sec:proof3case} gives two proofs of Theorem~\ref{thm:secret} in the case where~${n=3}$, the first of which was originally presented by the second author in the \emph{PBS Infinite Series} episode ``Splitting Rent with Triangles''~\cite{houston-edwards2017}. Section~\ref{sec:general} generalizes the second proof given in Section~\ref{sec:proof3case} to show the main theorem. Section~\ref{sec:algorithm} explains how this proof yields an algorithm to find the fair division of rent. Section~\ref{sec:generalizations} utilizes the piecewise linear map introduced in previous sections to prove two generalizations of Sperner's lemma in an elementary way --- one of these generalizations had been conjectured by the third author and was recently proven by Babson~\cite{babson2012} with different methods.

\section{Sperner's lemma}
\label{sec:sperner}

Begin with a triangle which is subdivided into several smaller triangles. Label the three vertices of the original triangle from the 
set~$\{1,2,3\}$ so each vertex receives a distinct label. Then, label each vertex on the edges of the original triangle by either of 
the labels at the endpoints. Finally, label the interior vertices $1$, $2$, or~$3$ arbitrarily. This is a \emph{Sperner labeling} of 
a triangle, as in Figure~\ref{spernerlabeling}.

The result known as \emph{Sperner's lemma} for a triangle states that there exists some small triangle that exhibits each of the three labels on its vertices. Even stronger, there will be an odd number of such fully labeled triangles. Here we outline two proofs of Sperner's lemma. The first, known as the \emph{trap-door argument} goes back to Cohen~\cite{cohen1967} and Kuhn~\cite{kuhn1968}. The second is based on a piecewise linear map between the vertex labels. We use this map in Section~\ref{sec:proof3case} to prove there exists a fair division of rent for three roommates, one of whose preferences are secret.

\begin{figure}
	\centering
	\includegraphics[width=0.8\textwidth]{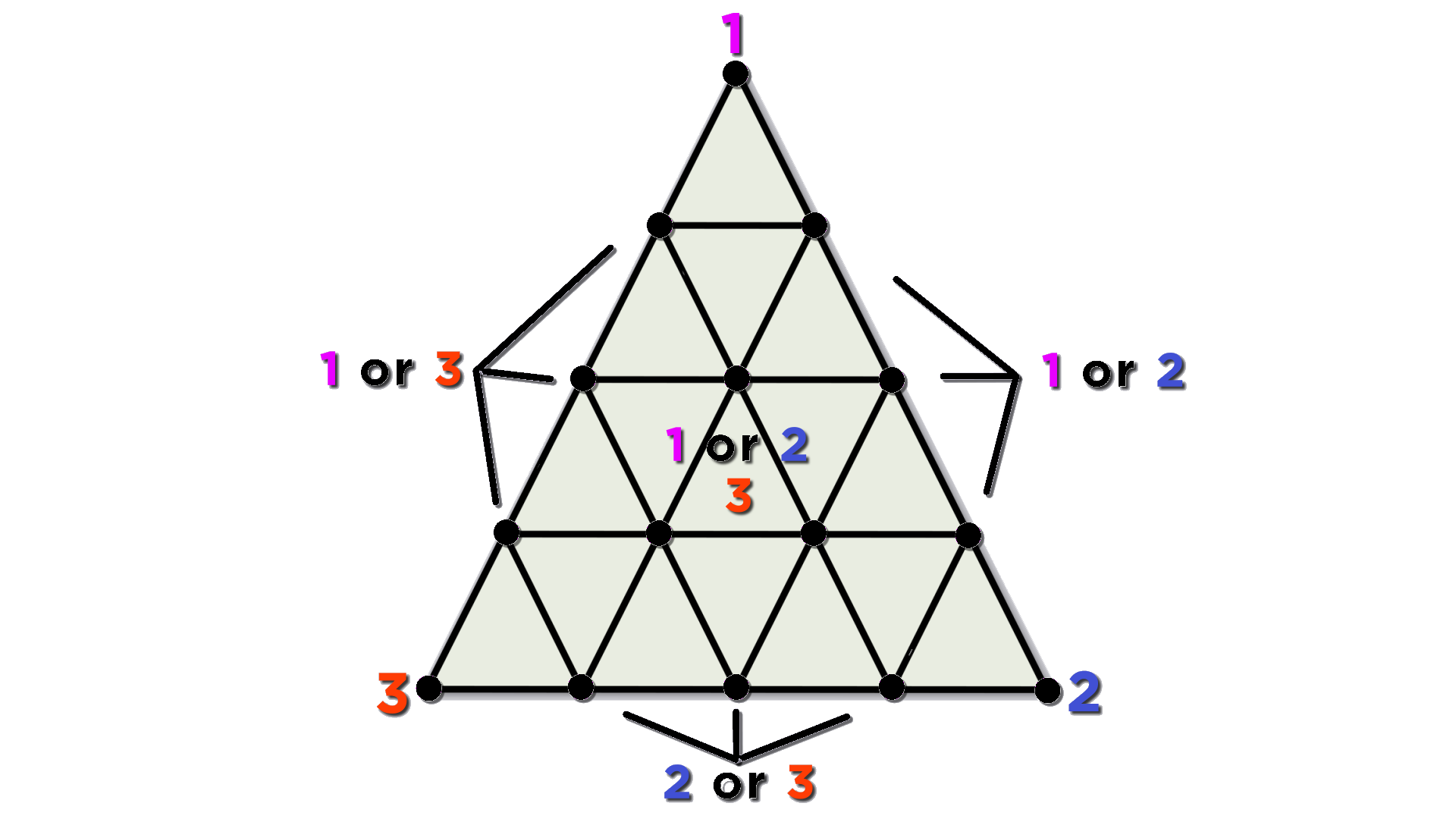}
	\caption{Options for a Sperner labeling}
	\label{spernerlabeling}
\end{figure}

\begin{proof}[Proof 1, Trap Door Argument:]
	Select two distinct vertices of the main triangle. To illustrate, we will use those labeled~$1$ and~$2$. Imagine each small triangle is a room and 
	any edge with endpoints labeled~$1$ and~$2$ is a door. Notice that each room has zero, one or two doors; it is impossible to have three doors. 
	The crucial observation is that a room has one door if and only if it is labeled with all three labels. To prove Sperner's lemma, we will find a room 
	with exactly one door. 
	
	Observe that there must be an odd number of doors along the boundary of the original triangle. All the boundary doors will fall on one edge of the original 
	triangle, between the main vertices labeled~$1$ and~$2$. Starting from the vertex labeled~$1$ and going down to the vertex labeled~$2$ at the bottom, 
	each time we encounter a smaller edge where the label switches from~$1$ to~$2$ or from~$2$ to~$1$, it's a door. It must ``switch'' an odd number of 
	times, so there are an odd number of exterior doors.
	
	Now, pick one of these exterior doors and walk through it. Either that was the only door in the room, or there is exactly one other door. If there is another 
	door, walk through that. Keep walking through doors -- without backtracking -- until you are stuck. This procedure leads to a room with only door, i.e., a 
	fully labeled room. See Figure~\ref{labelwithdoors}.
	
	However, following this procedure, it is possible to wander back out through a door on the boundary of the main triangle. This is why it is important that 
	there are an odd number of doors on the boundary. Entering through one door and exiting through another still leaves an odd number of boundary 
	doors -- namely, at least one -- to continue this procedure.

	Because there is an odd number of boundary doors, this procedure will result in an odd number of fully labeled rooms. However, there may be other 
	fully labeled rooms which are not accessible from the boundary. This will happen if and only if the single door of the fully labeled room leads to another 
	inaccessible fully labeled room. Since these inaccessible fully labeled rooms always come in pairs, the total number of fully labeled rooms must be odd.
\end{proof} 

The second proof will use the fact that a piecewise linear self map $\lambda\colon\Delta\longrightarrow \Delta$ of the triangle $\Delta$ that maps the boundary
to itself and has degree one on the boundary of the triangle (i.e., going around the boundary of $\Delta$ the image points wander once around the boundary as
well) must be surjective. While this is still intuitive, we will use a similar statement in higher dimensions when generalizing this proof: a piecewise 
linear self map $\lambda$ of the $n$-simplex $\Delta_n$ that preserves faces setwise (i.e., $\lambda(\sigma) \subseteq \sigma$ for any face $\sigma$ of $\Delta_n$) 
has degree one on the boundary and thus is surjective. We will give a simple path-following proof of the fact that any such map is surjective in Section~\ref{sec:algorithm}. In particular, any topological fact used in our argument will be 
proved in an elementary fashion.

	\begin{figure}
		\centering
		\includegraphics[width=0.5\textwidth]{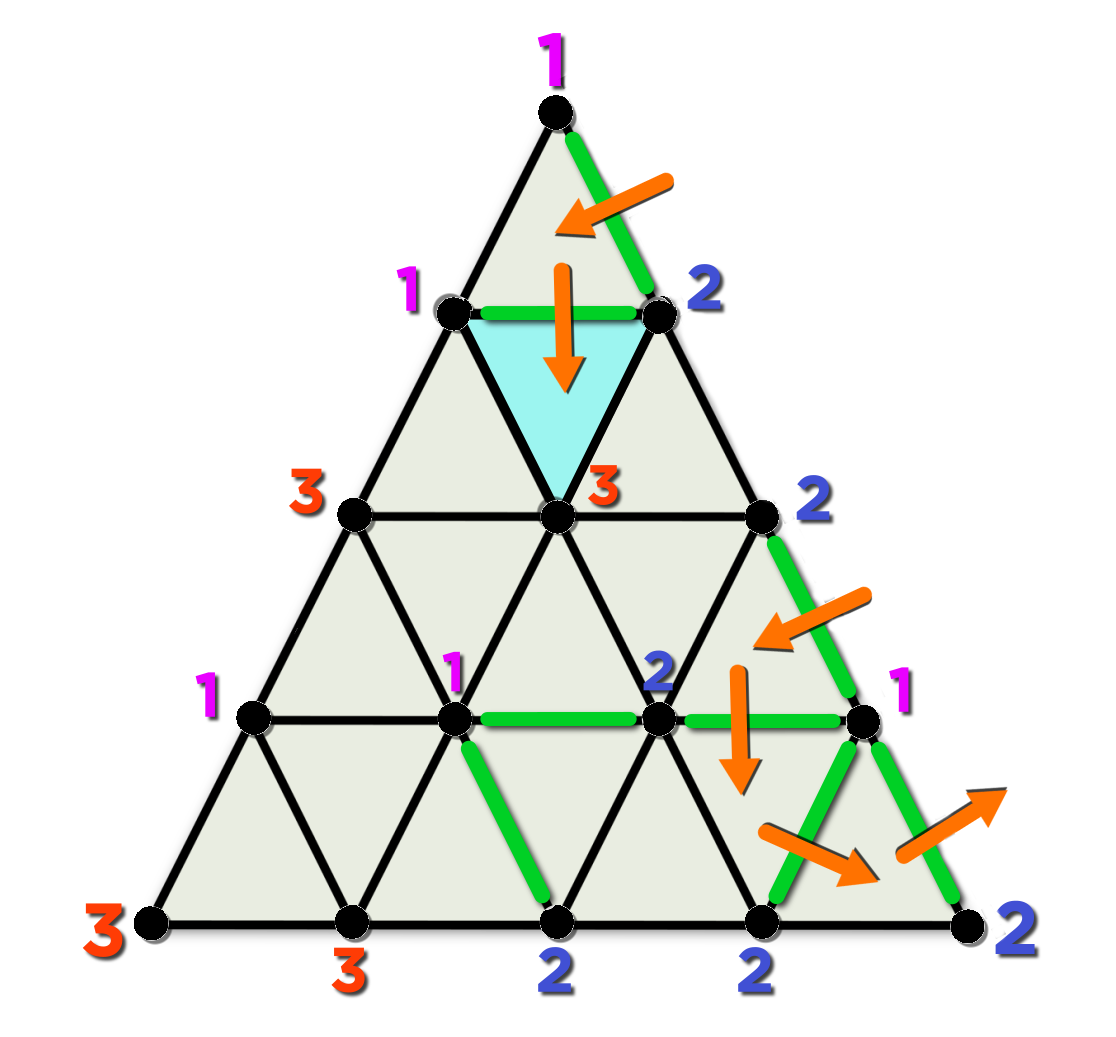}
		\caption{Procedure for finding fully labeled room}
		\label{labelwithdoors}
	\end{figure}

\begin{proof}[Proof 2, Piecewise Linear Map:]
	A Sperner labeling of a subdivided triangle can naturally be thought of as a piecewise linear map $\lambda \colon \Delta \longrightarrow \Delta$ from the triangle to itself, defined as follows. For each small triangle, $\lambda$ maps each vertex to the vertex of the original triangle with the same label. Then extend this map linearly within the small triangle. For example, the barycenter of a small triangle labeled $1,2,3$ maps to the barycenter of the original triangle,
	while the barycenter of a triangle labeled $1,1,2$ maps to the point on the edge with endpoints labeled $1$ and $2$ that separates the edge in a two-to-one ratio. Note that, for any edge $e$ of the original triangle $\lambda(e) \subseteq e$ (thus $\lambda$ also fixes the vertices), and that $\lambda$ is continuous. 
	
	Moving around the boundary of the original triangle and tracing the image points of~$\lambda$, we can see that $\lambda$ has degree one on the boundary of the original triangle. This implies that it is surjective and, in particular, there exists a point $x \in \Delta$ such that $\lambda(x)$ is the barycenter of the original triangle, i.e., $\lambda(x) = (\frac13, \frac13, \frac13)$. Then $\tau$, the smaller triangle containing $x$, must be fully labeled. If $\tau$ had only two of the labels, then $\lambda(\tau)$ would be contained in the edge of the original triangle spanned by the vertices with the two labels of $\tau$. This contradicts that $\lambda(x)$ is the barycenter.
\end{proof}

Both proofs of Sperner's lemma easily generalize to higher dimensions. Given an $(n-1)$-dimensional simplex~$\Delta_{n-1}$ that is subdivided into smaller $(n-1)$-dimensional simplices, a \emph{Sperner labeling} is a labeling of the vertices with $\{1,2,...,n\}$ such that (1) the $n$ vertices of the original $(n-1)$-simplex receive distinct labels, and (2) a vertex subdividing a $k$-face of~$\Delta_{n-1}$ is labeled by one of the $k+1$ labels of the $k$-face. \emph{Sperner's lemma} for higher-dimensional simplices states that any subdivided $(n-1)$-simplex with a Sperner labeling contains an odd number of smaller $(n-1)$-simplices that exhibit all $n$ labels on their vertices. More precisely, by a subdivision of $\Delta_{n-1}$ into smaller $(n-1)$-simplices, we mean that $\Delta_{n-1}$ is covered by a collection of $(n-1)$-simplices whose interiors are disjoint and such that any two of them intersect in a (possibly empty) face of both of them. We call such a subdivision a \emph{triangulation} of~$\Delta_{n-1}$.

The first proof of Sperner's lemma, using the trap-door argument, generalizes by induction. Assuming the lemma holds up to dimensions~${n-1}$, one can prove it holds for $n$-dimensional simplices by treating the $(n-1)$-dimensional faces labeled $(1,2,...,n)$ as doors. The proof of the existence of a fully labeled $n$-simplex then follows by direct analogy with the $n=2$ case above. A fully labeled $(k-1)$-dimensional face is a boundary door for a $k$-dimensional face, so one can link together the paths created in the trap-door argument to obtain the fully labeled room.

The second proof generalizes to higher-dimensional simplices more directly. The map $\lambda\colon \Delta_n \longrightarrow \Delta_n$ from the $n$-simplex to itself is defined nearly identically: Map each vertex of a smaller $n$-simplex to the vertex of the original simplex with the same label and then extend the map linearly inside each smaller simplex. 
Again, $\lambda$ has degree one on the boundary: the rules of a Sperner labeling impose that for any face $\sigma$ of the simplex $\Delta_n$ the image $\lambda(\sigma)$ is contained in~$\sigma$, since subdivision vertices contained in $\sigma$ are only labeled by labels found at the vertices of~$\sigma$. 
Thus $\lambda|_{\partial\Delta_n}$ is homotopic to the identity by induction on skeleta, which implies it has degree one. One can build the homotopy dimension by dimension using the fact that homotopies extend from lower-dimensional skeleta of a simplicial complex.
There must exist a point $x \in \Delta_n$ such that $\lambda(x) = (\frac{1}{n+1},\frac{1}{n+1},...,\frac{1}{n+1})$ is the barycenter. (As mentioned earlier, we will give an elementary proof of this fact in Section~\ref{sec:algorithm}.) The smaller $n$-simplex containing $x$ must be fully labeled.

\section{Rental harmony in the absence of full information for three roommates}
\label{sec:proof3case}

Here we give two combinatorial proofs of  the $n=3$ case of Theorem~\ref{thm:secret}, which mirror the two proofs of Sperner's lemma in Section~\ref{sec:sperner}. 
The first proof is constructive and yields an algorithm to find the fair division of rent, since it reduces Theorem~\ref{thm:secret} to Sperner's lemma for $n=3$. As presented in this section, the second proof only gives the existence of a fair division with a secret preference, but Section~\ref{sec:algorithm} explains how this method also yields an algorithm.

\begin{proof}[Proof 1:]	
	
	Form a triangle in $\R^3$ whose vertices are given by $(1,0,0), (0,1,0)$, and $(0,0,1)$. Because this lies in the plane $x+y+z=1$, we can interpret each point in the triangle as a division of rent. For example, $(\frac{1}{4},\frac{1}{4},\frac{1}{2})$ indicates that rooms 1 and 2 cost one-quarter of the total rent and room 3 costs one-half of the total rent. 
	
	Subdivide this triangle into many smaller triangles, i.e., triangulate the original triangle. If this triangulation is fine enough, then the vertices of a single small triangle represent divisions of the rent that only differ from one another by a penny or so. 
	At each vertex we survey both Larry and Moe, asking which room they would prefer if the rent were split in this specific way. We record their preferences as a tuple of two integers $(a,b)$ with $1 \le a,b \le 3$, where $a$ is the number of Larry's preferred room and $b$ is the number of Moe's preferred room. There are three vertices of the triangulation -- the original
	vertices of the triangle -- where two rooms are free. For those rent divisions we kindly ask Larry and Moe to make distinct choices,
	that is, the original vertices of the triangle all have labels $(a,b)$ with $a \ne b$. 
	
	At every vertex of the triangulation transform the label $(a,b)$ into a single-digit label according to the following rules: label the vertex~$3$
	if the previous label was $(1,1)$, $(1,2)$, or $(2,1)$, label~$1$ for $(2,2)$, $(2,3)$, or $(3,2)$, and $2$ for $(3,3)$, $(3,1)$, or $(1,3)$. The
	three original vertices of the triangle receive pairwise distinct labels since Larry and Moe decided for different rooms whenever
	two rooms were free simultaneously. On every edge of the triangle the vertices of the triangulation have the same label --- only
	one room is free. And this label is one of the two labels we find at the endpoints of that edge. Therefore we have defined a Sperner labeling.
	Thus we can find a small triangle that has vertices with all three labels. The corresponding rent divisions are all within a small margin of
	error. It is simple to check that seeing all three labels for this rent division (with small error margin) implies that Larry and Moe each prefer at least two distinct rooms, and each room is preferred by at least one of them. For example, it is impossible that Larry only likes room~$1$ since one vertex is labeled~$1$; and it is impossible that both Larry and Moe dislike room~$3$
	since one vertex is labeled~$2$. This means that regardless of which room Curly chooses, both Larry 
	and Moe will be left with one of their favorite rooms.
\end{proof}

The reduction in the proof above actually proves a generalization of Sperner's lemma: given two Sperner labelings of a subdivided triangle that
match up on the original vertices of the triangle, use the rules above to transform them into one Sperner labeling; a fully-labeled triangle, which exists
by Sperner's lemma, now exhibits all three labels across the two Sperner labelings and both Sperner labelings exhibit at least two labels.
Higher-dimensional generalizations of Sperner's lemma to multiple labelings were conjectured by the third author and proven by Babson~\cite{babson2012}.
We will treat these and other extensions with new and simple proofs in Section~\ref{sec:generalizations}.

\begin{proof}[Proof 2:]	
	This second proof of Theorem~\ref{thm:secret} for $n=3$ will easily generalize to the case of $n$ roommates and still yield an algorithm to find the fair division of rent. It is based on the same piecewise linear map used in the second proof of Sperner's lemma.
	
	As in the first proof, construct a triangulated standard simplex in $\R^3$, and at each vertex, survey Larry and Moe about their room preferences. Instead of recording this as a tuple, construct two piecewise linear maps $\lambda_1, \lambda_2\colon \Delta \longrightarrow \Delta$ which reflect the preferences of Larry and Moe, respectively. That is, for each vertex $v$ of the triangulation, $\lambda_1$ maps $v$ to the vertex of the original triangle with the same label as Larry's preferred room at the price division given at vertex~$v$. The map $\lambda_2$ is defined in the same way using Moe's preferences. Then $\lambda_i$ is defined within each smaller triangle as the linear extension of the values at its vertices.
	
	Let $\lambda = \frac12(\lambda_1+\lambda_2)$ denote their average, which again is a piecewise linear map $\lambda\colon \Delta \longrightarrow \Delta$.
	The map $\lambda$ maps vertices of the subdivision of~$\Delta$ either to one of the three original vertices of~$\Delta$ (if the vertex receives
	the same label by both Larry and Moe) or to one of the three midpoints of edges (if the labelings do not agree on the vertex).
	
	Observe that for every subdivision vertex $v$ the vector $2\lambda(v)$ counts how often each label is exhibited in~$v$, that is,
	if $2\lambda(v) = (1,0,1)$ then $\lambda_1(v) = e_1$ and $\lambda_2(v) = e_3$ or vice versa.
	We can suppose that on each of the original vertices of the triangle Larry and Moe decide for the same room, and that they 
	decide for each room precisely once on one of the three original vertices.
	Then as before we check that $\lambda$ has degree one on the boundary of~$\Delta$ and thus there is a point $x \in \Delta$ with
	$\lambda(x) = (\frac13, \frac13, \frac13)$. The point $x$ lies in some small triangle $\tau$. 
	
	We claim that (1) $\tau$ exhibits all three labels, and (2) both $\lambda_1$ and $\lambda_2$ exhibit at least two labels. To see claim (1), note that if $\tau$ exhibited only two of the labels, then one of the coordinates of $\lambda(x)$ would be zero. To see claim (2) assume for contradiction that either $\lambda_1$ or $\lambda_2$ only exhibits one of the labels. Assume that label is 1. Then the first coordinate of $\lambda(v)$ will be greater than or equal to $\frac{1}{2}$ for each vertex $v$ of $\tau$. And so the first coordinate of $\lambda(y)$ for any $y\in \tau$ must be at least $\frac{1}{2}$, which contradicts $\lambda(x) = (\frac13,\frac13,\frac13)$. After the secret roommate selects their room, two remain. Since $\tau$ exhibits all three labels, each of the remaining labels will be exhibited by $\lambda_1$ or $\lambda_2$. Moreover, since $\lambda_1$ and $\lambda_2$ each exhibit at least two distinct labels, it is impossible that one person prefers neither room. Therefore, the remaining two rooms can be assigned in an envy-free way.
\end{proof}

\section{The general case}
\label{sec:general}

Generalizing the last section, here we give a proof of the main result, Theorem~\ref{thm:secret}: One can always find a fair division of the rent for an $n$-bedroom apartment given the subjective preferences of only $n-1$ roommates. The proof is a generalization of the second proof given in Section~\ref{sec:proof3case}.

\begin{proof}[Proof:]
	For $n$ roommates, we consider the standard $(n-1)$-simplex in $\R^n$. Its vertices lie on $e_1,...,e_n$, the standard basis of~$\R^n$, and $x_1+x_2+\cdots+x_n = 1$ for any points $(x_1,\dots,x_n)$ in the simplex. Similar to the $n=3$ case, each point in the simplex is a distribution of the rent and the fraction of the rent corresponding to the $i$th room is given by~$x_i$. Triangulate the simplex finely enough so that rent division in the same subdivision simplex are within a one-cent error-margin.
		
	For each of the $n-1$ given subjective preferences, we define a map $\lambda_j\colon \Delta_{n-1} \longrightarrow \Delta_{n-1}$ from the triangulated $(n-1)$-simplex to itself, defined as follows. For each vertex $v$ of the triangulated simplex, $\lambda_j(v)$ maps to a vertex of the original simplex, recording the $j$th roommate's preference. For example, if $\lambda_j(v) = e_3$ then person~$j$ prefers room 3 at the price distribution given at~$v$. After $\lambda_j$ is specified on each vertex of the triangulation, define $\lambda_j$ within each smaller simplex as the linear extension of its values on the vertices of the smaller simplex.
	
	We use that no roommate strictly prefers a free room over another free room to argue 
that we can impose that the choices of each roommate define a Sperner labeling of the triangulation of~$\Delta_{n-1}$.
The effect of this is, that up to a permutation of the vertices, each map $\lambda_j$ satisfies $\lambda_j(\sigma) \subseteq \sigma$ for each
face $\sigma$ of~$\Delta_{n-1}$. 

The vertices $e_1, e_2, \dots, e_n$ of~$\Delta_{n-1}$ correspond to the rooms $1,2,\dots, n$, where room $i$ is the only
non-free room at~$e_i$. We ask each roommate to decide for room $\pi(i) \coloneqq i+1$ at~$e_i$ for $i < n$ and room $\pi(n) \coloneqq 1$ at $e_n$.
This labeling of the vertices extends to a Sperner labeling such that the label of any vertex on the boundary
of~$\Delta_{n-1}$ corresponds to a free room. To see this notice that the rooms that are free for rent divisions in some face
$\sigma$ of~$\Delta_{n-1}$ are precisely those rooms that correspond to vertices of~$\Delta_{n-1}$ not contained in~$\sigma$.
Thus for any proper face $\sigma$ a vertex of $\sigma$ must be labeled by a free room, since $\pi$ has only one orbit, 
and hence such a free room is a valid label for vertices inside~$\sigma$.

We have shown that we can assume that the choices of each roommate define a Sperner labeling.
Thus, up to renaming vertex $e_i$ in the domain of $\lambda_j$ as~$e_{\pi(i)}$, 
each map $\lambda_j$ satisfies $\lambda_j(\sigma) \subseteq \sigma$ for each face of~$\Delta_{n-1}$. 
More precisely, let $f \colon \Delta_{n-1} \longrightarrow \Delta_{n-1}$ be the affine map that is 
defined on vertices by $f(e_i) = e_{i+1}$ for $i < n$ and $f(e_n) = e_1$. Then $f^{-1}(\lambda_j(\sigma)) \subseteq \sigma$ for every face $\sigma$ of~$\Delta_{n-1}$.
We will from here on tacitly assume this renaming of the vertices so that each $\lambda_j$ satisfies $\lambda_j(\sigma) \subseteq \sigma$
for every face $\sigma$ of~$\Delta_{n-1}$.

	Let $\lambda\colon \Delta_{n-1} \longrightarrow \Delta_{n-1}$ denote the average, $\lambda := \frac{1}{n-1}(\lambda_1 + \lambda_2 + \cdots + \lambda_{n-1})$. 
	Since each $\lambda_j$ fixes the faces of $\Delta_{n-1}$ setwise, so does their average~$\lambda$. Thus, as before, $\lambda$ is surjective. Let $x \in \Delta_{n-1}$ such that $\lambda(x) = (\frac{1}{n}, \dots, \frac{1}{n})$. That is, $x$ is mapped to the barycenter. Let $\tau$ be the simplex containing~$x$.
	
	Fix $k \in \{1, \dots, n-1\}$. We claim that any $k$-subset of the labelings $\lambda_i$ will exhibit at least $k+1$ labels within~$\tau$. Assume for contradiction that $\Lambda$ is some $k$-subset of the $\lambda_i$ that only exhibits the $k$ labels $e_{j_1}, \dots, e_{j_k}$. Then $\lambda(\tau)$ will be shifted toward the vertices $e_{j_i}$ and thus will not contain the barycenter. More precisely, for any vertex $v$ of $\tau$ and any $i \in \Lambda$, 
	$$\langle \lambda_i(v), e_{j_1} + \dots + e_{j_k} \rangle = 1$$
	and hence
	$$\langle \lambda(v), e_{j_1} + \dots + e_{j_k} \rangle \ge \frac{k}{n-1}.$$
	Since this holds for all vertices $v$ of~$\tau$, it also holds for any point inside~$\tau$. But,
	$$\langle \lambda(x), e_{j_1} + \dots + e_{j_k} \rangle = \langle (\frac{1}{n}, \dots, \frac{1}{n}), e_{j_1} + \dots + e_{j_k} \rangle = \frac{k}{n},$$
	which is a contradiction.
	
	This means any subset of $k$ roommates prefers at least $k+1$ rooms. This implies there is a fair division independently of which room is picked by the secretive roommate given by the labels of~$\tau$. This is because no matter which room the secretive roommate picks, any subset of $k$ (nonsecretive) roommates has $k$ rooms to pick from; construct a bipartite graph with vertices corresponding to the $n-1$ roommates on the one hand and vertices corresponding to the $n-1$ untaken rooms on the other. We add an edge for a pair of roommate and room if the roommate prefers this particular room. A fair rent division now corresponds to a perfect matching, which exists by Hall's marriage theorem: in any bipartite graph with bipartite sets $A$ and $B$ there exists a matching that entirely covers $A$ if and only if for every subset $W \subseteq A$ its neighborhood $N(W)$ satisfies $|N(W)| \ge |W|$.
\end{proof}

\section{Algorithmic aspects}
\label{sec:algorithm}

Our proof in the previous section described the construction of a piecewise linear map $\lambda\colon \Delta_{n-1} \longrightarrow \Delta_{n-1}$
that fixes the faces of $\Delta_{n-1}$ setwise: $\lambda(\sigma) \subseteq \sigma$. Such a map is necessarily surjective. A fair division of rent corresponds to a 
simplex $\tau$ of the triangulation of $\Delta_{n-1}$ such that there is an $x \in \tau$ that $\lambda$ maps to the barycenter of~$\Delta_{n-1}$. 
Here we describe a simple algorithm how to find the face~$\tau$. This algorithm does not use the surjectivity of~$\lambda$, but only that
$\lambda(\sigma) \subseteq \sigma$ for each face $\sigma$ of~$\Delta_{n-1}$. Thus our algorithm also gives an elementary proof of the 
surjectivity of~$\lambda$.

It is instructive to first consider low-dimensional cases. The algorithm for $n=2$ roommates just traverses the interval
$\Delta_1$ until for some edge $e$ of (a triangulation of) $\Delta_1$ the image $\lambda(e)$ contains the barycenter,
which must exist by the Intermediate Value Theorem.

In general, our algorithm is a trap-door argument, which we first describe for the case of a triangle $\Delta_2 = \conv\{e_1,e_2,e_3\}$.
We are given a triangulation $T$ of $\Delta_2$ and a map $\lambda \colon \Delta_2 \longrightarrow \Delta_2$ that interpolates 
linearly on every face of~$T$. The map $\lambda$ fixes the vertices and edges of $\Delta_2$ setwise. 
We will construct a path that starts in the vertex $e_1$ of~$\Delta_2$ and ends in a triangle 
$\sigma$ of~$T$ such that $\lambda(\sigma)$ contains the barycenter $\frac13(e_1+e_2+e_3)$ of~$\Delta_2$. We will now
describe the rooms and doors for our trap-door argument, or equivalently the vertices and edges of a graph $G$ such that 
following paths in this graph will lead to a triangle mapped to the barycenter by~$\lambda$. To build this graph we assume that 
$\lambda$ is generic in the sense that no vertex of $T$ gets mapped to the segment connecting the barycenter of $[e_1,e_2]$ to 
the barycenter of~$\Delta_2$.
The vertices of~$G$, or the rooms, are: 
\begin{compactitem}
	\item the vertex $e_1$ of $\Delta_2$,
	\item any edge $e$ of $T$ that subdivides the edge $[e_1,e_2]$ of $\Delta_2$ and such that $\lambda(e) \cap [e_1, \frac12(e_1+e_2)] \ne \emptyset$;
	that is, the image of $e$ under $\lambda$ intersects the segment from vertex $e_1$ to the barycenter of $[e_1,e_2]$,
	\item any triangle $\sigma$ of $T$ such that $\lambda(\sigma)$ intersects the segment $[\frac12(e_1+e_2), \frac13(e_1+e_2+e_3)]$
	connecting the barycenter of the edge $[e_1,e_2]$ to the barycenter of~$\Delta_2$. 
\end{compactitem}

The edges of~$G$, or the doors, are:
\begin{compactitem}
	\item between $e_1$ and the vertex corresponding to the edge of $T$ that contains $e_1$ and subdivides~$[e_1,e_2]$,
	\item between any two vertices corresponding to boundary edges of $T$ that share a common vertex $v$ with $\lambda(v) \in [e_1, \frac12(e_1+e_2)]$,
	\item between any two vertices corresponding to triangles of $T$ that share a common edge $e$ with 
	$\lambda(e) \cap[\frac12(e_1+e_2), \frac13(e_1+e_2+e_3)]\ne \emptyset$,
	\item between a boundary edge $e$ of $T$ with $\frac12(e_1+e_2) \in \lambda(e)$ and the incident triangle $\sigma$ of~$T$, i.e., the unique triangle $\sigma$
	that contains $e$ as an edge.
\end{compactitem}

We claim that a connected component of the graph~$G$ is a path from $e_1$ to some triangle $\sigma$ with $\frac13(e_1+e_2+e_3) \in \lambda(\sigma)$.
Notice that $e_1$ is incident to one other vertex in $G$ (corresponding to the unique edge of $T$ that has $e_1$ as a vertex and
subdivides~$[e_1,e_2]$). If $e$ is a boundary edge of $T$ with $\lambda(e) \subseteq [e_1, \frac12(e_1+e_2)]$ then in $G$ it is connected to 
two other boundary faces of $T$ (the ones that lie to the left and right of $e$ on the edge~$[e_1,e_2]$). If $e$ is a boundary edge of $T$ with $\frac12(e_1+e_2) \in \lambda(e)$
then precisely one of its vertices gets mapped to $[e_1, \frac12(e_1+e_2)]$, so $e$ is connected to one other edge of $T$ in~$G$. The other
neighbor of $e$ in $G$ is the unique triangle $\sigma$ of $T$ that contains $e$ as an edge. Now since generically line segments intersect
(boundaries of) triangles in either one or two edges, the vertices of $G$ corresponding to triangles have precisely two neighbors unless
the segment of points in the triangle $\sigma$ that $\lambda$ maps to $[\frac12(e_1+e_2), \frac13(e_1+e_2+e_3)]$ intersect $\sigma$ in
only one edge. In that case $\frac13(e_1+e_2+e_3) \in \lambda(\sigma)$. Thus $G$ is a graph where all vertices 
have degree one or two, and have degree one if and only if they correspond to a triangle that gets mapped to the barycenter of~$\Delta_2$ or correspond to our starting point~$e_1$.
Starting in the vertex $e_1$ and following the edges of $G$ we must end up in such a triangle, as desired.

To summarize the algorithm, we start walking in the vertex $e_1$ and traverse along the edge $[e_1,e_2]$ until we hit an edge of $T$ that is
mapped to the barycenter of $[e_1,e_2]$. From there we walk inwards into the triangle $\Delta_2$ following a path of triangles whose image
under $\lambda$ intersects $[\frac12(e_1+e_2), \frac13(e_1+e_2+e_3)]$. This either ends in a triangle of $T$ that gets mapped to the barycenter
$\frac13(e_1+e_2+e_3)$, or we return to edges subdividing $[e_1,e_2]$. However, in the latter case we must leave the edge $[e_1,e_2]$
and follow a path of triangles again. After finitely many trips back to the edge $[e_1,e_2]$ we must end up in a triangle mapped to the barycenter.

This construction and algorithm easily generalize to higher dimensions. We work with the faces 
$$e_1, \conv\{e_1,e_2\}, \conv\{e_1,e_2,e_3\}, \dots, \conv\{e_1, \dots, e_n\} = \Delta_{n-1}$$
and their barycenters $b_k = \sum_{i=1}^k \frac1ke_i$. All faces $\sigma$ of the triangulation $T$ of $\Delta_{n-1}$ that subdivide one of these
faces, say $\conv\{e_1, \dots, e_k\}$, and such that $\lambda(\sigma)$ intersects the segment $[b_{k-1},b_k]$ that joins the barycenter of
$\conv\{e_1, \dots, e_{k-1}\}$ to that of $\conv\{e_1, \dots, e_k\}$ make up the vertices of~$G$. The vertex $e_1$ of $\Delta_n$ is a vertex of 
$G$ as well. Two such faces $\sigma_1$ and $\sigma_2$ of dimension $k$ are connected by an edge in $G$ if they share a common
$(k-1)$-face $\tau$ such that $\lambda(\tau)$ intersects $[b_k, b_{k+1}]$. We assume that if $\lambda(\tau)$ intersects $[b_k, b_{k+1}]$, 
then there is a point $x$ in the relative interior of $\tau$ such that $\lambda(x) \in [b_k, b_{k+1}]$. This can be achieved by slightly perturbing the 
barycenters~$b_k$.
Moreover, there is an edge between $k$-face $\sigma$ and $(k-1)$-face
$\tau$ if $\tau$ is a face of $\sigma$ in $T$ and $b_k \in \lambda(\tau)$.

A line segment generically cannot intersect a $k$-face in more than two of its $(k-1)$-faces and it intersects in precisely one $(k-1)$-face if
it ends inside the $k$-face. Thus our reasoning for $\Delta_2$ also applies to this higher-dimensional construction and starting in the vertex
$e_1$ of $G$ we can follow edges of $G$ to end up in an $(n-1)$-face $\sigma$ of $T$ with $\frac1n\sum_{i=1}^n e_i \in \lambda(\sigma)$.

\section{Generalizations of Sperner's Lemma}
\label{sec:generalizations}

The methods of Section~\ref{sec:general} actually yield generalizations of Sperner's lemma to multiple labelings.
Fix a triangulation of the $n$-simplex and several Sperner labelings $\lambda_1, \dots, \lambda_m$ of it. 
We will always assume that these labelings match up on the original $n+1$ vertices of~$\Delta_n$. By Sperner's lemma each of these
labelings has a fully-labeled simplex. It is simple to come up with examples where no pair of these respective fully-labeled simplices coincide. 
In attempting to understand how many labels a single simplex must exhibit across the $m$ Sperner labelings, there are two natural
questions:
\begin{compactenum}[1.]
	\item \label{item:primal}
	How can we constrain $(n+1)$-tuples $(k_0, \dots, k_n)$ of nonnegative integers such that there is a simplex $\tau$ that exhibits
	the $i$th label $k_i$ times across the $m$ Sperner labelings $\lambda_1, \dots, \lambda_m$? 
	\item \label{item:dual}
	Dually, how can we constrain $m$-tuples $(k_1, \dots, k_m)$ of nonnegative integers such that there is a simplex $\tau$ on which
	$\lambda_i$ exhibits $k_i$ pairwise distinct labels?
\end{compactenum}

We will relate the first question to convex hulls of \emph{lattice points}, i.e., points with integer coordinates, 
in $m \cdot \Delta_n=\{x \in \R^{n+1} \: | \: \sum x_i = m, x_i \ge 0\}$, the $n$-simplex scaled by~$m$.
We will show that the label multiplicities $(k_0, \dots, k_n)$ that must occur are given by sets of $n+1$ 
lattice points in $m \cdot \Delta_n$ whose convex hulls all intersect in a common point $y$ that are maximal 
with this property, that is, any other convex hull of $n+1$ lattice points will not contain~$y$.

For example, given two Sperner labelings $\lambda_1$ and $\lambda_2$ of a triangulation of the triangle $\Delta_2$,
we need to understand intersections of convex hulls of three lattice points in $2\cdot \Delta_2$. The relevant lattice
points are the vertices $(2,0,0)$, $(0,2,0)$, $(0,0,2)$ and midpoints of edges $(0,1,1)$, $(1,0,1)$, $(1,1,0)$.
Say we consider the convex hulls of lattice points that capture $y = (2-3\varepsilon, 2\varepsilon, \varepsilon)$ for
some small $\varepsilon > 0$. The point $y$ is close to the vertex $(2,0,0)$ and even closer to the edge between
$(2,0,0)$ and $(0,2,0)$ without being on it. All possible choices of three lattice points whose convex hulls contain $y$
have the lattice point $(2,0,0)$, another lattice point on the edge between
$(2,0,0)$ and $(0,2,0)$, i.e., one of $(0,2,0)$ or $(1,1,0)$, and one lattice point that is not on that edge, i.e., one of $(1,0,1)$, $(0,0,2)$, or $(0,1,1)$.
So there are exactly six choices of three lattice points whose convex hulls capture~$y$. This geometric fact translates
into the following combinatorial fact about Sperner labelings $\lambda_1$ and $\lambda_2$ of a triangle: there always
is a smaller triangle with vertices $v_1, v_2, v_3$ such that
\begin{compactitem}
	\item $v_1$ exhibits the first label twice ($\lambda_1(v_1) = \lambda(v_1) = 1$) corresponding to the lattice point $(2,0,0)$,
	\item $v_2$ either exhibits the second label twice or the first and second label once corresponding to $(0,2,0)$ or $(1,1,0)$,
	\item and $v_3$ must exhibit the third label at least once corresponding to $(1,0,1)$, $(0,0,2)$, or $(0,1,1)$.
\end{compactitem}

A priori, the simplex that is labeled by $\lambda_1, \dots, \lambda_m$ and the scaled simplex that encodes all possible $(n+1)$-tuples
$(k_0, \dots, k_n)$ of label multiplicities are entirely different objects. While we should expect no direct relation between these two simplices, 
it turns out that we get constraints on the $k_i$ by thinking of the simplices as the same geometric object.

\begin{thm}
Let $\lambda_1, \dots, \lambda_m$ be Sperner labelings of a triangulation $T$ of~$\Delta_n$. Let $y \in m \cdot \Delta_n$ be some point that is not in the convex hull of any $n$ lattice points in~$m\cdot \Delta_n$. Then there is a facet $\sigma$ of $T$ and an ordering of its vertices $v_1, \dots, v_{n+1}$ such that the point $y$ is contained in $\conv\{y_1, \dots, y_{n+1}\}$, where 
$y_i \in m \cdot \Delta_n$ denotes the lattice point whose $j$th coordinate is the number of times $j$ labels $v_i$.
\end{thm}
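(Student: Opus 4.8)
The plan is to reuse the averaging / piecewise-linear-map idea from Section~\ref{sec:general}, but now with the scaled simplex $m\cdot\Delta_n$ as the target. Regard each Sperner labeling $\lambda_i$ as the piecewise linear map $\lambda_i\colon\Delta_n\to\Delta_n$ that sends a vertex $v$ of $T$ to the standard basis vector $e_{\lambda_i(v)}$ and is linear on every simplex of $T$. As recalled in Section~\ref{sec:sperner}, a Sperner labeling forces $\lambda_i(\sigma)\subseteq\sigma$ for every face $\sigma$ of $\Delta_n$ (using here that the $\lambda_i$ agree on the original $n+1$ vertices, which carry the standard labels). The object to work with is the sum $\Lambda\coloneqq\lambda_1+\cdots+\lambda_m$, a piecewise linear map $\Lambda\colon\Delta_n\to m\cdot\Delta_n$, linear on every simplex of $T$.

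The first step is to identify $\Lambda$ on vertices with the label-count vectors: for a vertex $v$ of $T$ one has $\Lambda(v)=\sum_{i=1}^m e_{\lambda_i(v)}$, whose $j$th coordinate counts how many of the labelings assign the label $j$ to $v$, so $\Lambda(v)=y_v$ in the notation of the statement. The second step is to observe that $\Lambda$ still fixes faces setwise in the scaled sense: for $x\in\sigma$ the point $\Lambda(x)=\sum_i\lambda_i(x)$ is a sum of $m$ points of $\sigma$, hence lies in $m\cdot\sigma$. Therefore $\tfrac1m\Lambda\colon\Delta_n\to\Delta_n$ is a piecewise linear self-map fixing every face of $\Delta_n$ setwise, so by the path-following argument of Section~\ref{sec:algorithm} it is surjective; hence $\Lambda$ maps $\Delta_n$ onto all of $m\cdot\Delta_n$.

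Given the point $y\in m\cdot\Delta_n$, I would then pick $x\in\Delta_n$ with $\Lambda(x)=y$ and let $\sigma$ be any facet (top-dimensional simplex) of $T$ containing $x$, with vertices $v_1,\dots,v_{n+1}$; writing $y_i\coloneqq y_{v_i}$, linearity of $\Lambda$ on $\sigma$ gives $y=\Lambda(x)\in\Lambda(\sigma)=\conv\{y_1,\dots,y_{n+1}\}$, which is exactly the assertion. The genericity hypothesis on $y$ makes this sharp: if the lattice points $y_1,\dots,y_{n+1}$ were affinely dependent they would lie in an affine subspace of dimension at most $n-1$, so Carathéodory's theorem would place $y$ in the convex hull of at most $n$ of them, contradicting the hypothesis; hence the $y_i$ are affinely independent, $\conv\{y_1,\dots,y_{n+1}\}$ is an $n$-simplex, and $y$ lies in its relative interior (each facet of that simplex being the hull of $n$ lattice points). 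Thus the multiplicity vector one reads off $\sigma$ is a maximal one in the sense discussed before the statement.

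I do not expect a serious obstacle: the topological content is already packaged into the surjectivity statement of Section~\ref{sec:algorithm}, and the only genuinely new point is that this surjectivity is insensitive to rescaling the codomain, so it applies to $\Lambda$ with target $m\cdot\Delta_n$. The mild things needing care are that $\Lambda$ inherits the face-preserving property into the scaled simplex (immediate from the ``sum of $m$ points of $\sigma$'' observation) and that one may always enlarge a simplex of $T$ containing $x$ to a top-dimensional one, so that ``facet of $T$'' in the statement can indeed be realized.
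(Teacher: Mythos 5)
Your proposal is correct and follows essentially the same route as the paper: form the sum $\lambda_1+\cdots+\lambda_m$ as a piecewise linear map to $m\cdot\Delta_n$, note that its average fixes the faces of $\Delta_n$ setwise and is therefore surjective by the path-following argument, take a preimage $x$ of $y$, and read off the conclusion from linearity on a facet of $T$ containing $x$. Your added Carath\'eodory remark on affine independence of the $y_i$ is a harmless extra not needed for (but consistent with) the paper's argument.
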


\begin{proof}[Proof:]
	Let $\lambda = \lambda_1+ \dots + \lambda_m \colon \Delta_n \longrightarrow m\cdot \Delta_n$. As before since the $\lambda_i$ are
	Sperner labelings the map $\lambda|_{\partial\Delta_n}$ has degree one as a map to the boundary of $m\cdot \Delta_n$; the map
	$\lambda$ satisfies $\lambda(\sigma) \subseteq m \cdot \sigma$ for any face $\sigma$ of~$\Delta_n$. Just as before the average 
	$\frac1m \lambda$ fixes faces setwise. Thus there
	is an $x \in \Delta_n$ with $\lambda(x) = y$. Let $\tau$ be a face of the triangulation of $\Delta_n$ that contains~$x$. The map~$\lambda$
	maps vertices of the triangulation of~$\Delta_n$ to lattice points of~$m \cdot \Delta_n$. Since $y$ is not in the convex hull of fewer than
	$n+1$ lattice points in $m \cdot \Delta_n$, the vertices of~$\tau$ must be mapped precisely to the elements of a set of $n+1$ lattice points
	in $m\cdot \Delta_n$ whose convex hull captures~$y$.
\end{proof}

Question~\ref{item:dual} can be approached in much the same way. Instead of defining the map $\lambda$ as the sum or average of the 
piecewise linear extensions of the Sperner labelings as before, we now take a biased average with weights according to how many labels
each Sperner labeling is supposed to exhibit. The third author conjectured in his dissertation that $\sum k_i = n+m$ is a valid constraint for
question~\ref{item:dual}. This was recently proven by Babson~\cite{babson2012}. We give a different proof below in the spirit of 
the other proofs of this manuscript.

\begin{thm}
Let $\lambda_1,\ldots,\lambda_m$ be $m$ Sperner labelings of a triangulation of~$\Delta_n$ and let $k_1,\ldots,k_m$ be $m$ positive integers 
summing up to~$n+m$. Then there exists a simplex $\tau$ such that $\lambda_j$ exhibits at least $k_j$ pairwise distinct labels on~$\tau$ for all~$j$.
\end{thm}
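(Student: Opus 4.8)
The plan is to adapt the second proof of Theorem~\ref{thm:secret}, replacing the uniform average $\frac1m(\lambda_1+\cdots+\lambda_m)$ by a \emph{weighted} average whose weights are tuned to the prescribed multiplicities $k_j$. As in all the earlier arguments, I would first use that the labelings $\lambda_1,\dots,\lambda_m$ agree on the $n+1$ vertices of $\Delta_n$ to assume, after a global relabeling, that $\lambda_j(e_i)=e_i$ for all $i$ and $j$; for a Sperner labeling this forces the piecewise linear extension $\lambda_j\colon\Delta_n\to\Delta_n$ to satisfy $\lambda_j(\sigma)\subseteq\sigma$ for every face $\sigma$ of $\Delta_n$, and hence any convex combination of the $\lambda_j$ has the same property.

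The arithmetic heart of the proof is the identity $\sum_{j=1}^m (k_j-1) = (n+m)-m = n$, which gives $\sum_j \frac{k_j-1}{n+1} = \frac{n}{n+1} < 1$. Thus one can choose weights $w_1,\dots,w_m>0$ with $\sum_j w_j = 1$ and $w_j > \frac{k_j-1}{n+1}$ for every $j$; for instance $w_j = \frac{k_j-1}{n+1} + \frac{1}{m(n+1)}$ works. Now set $\lambda = \sum_{j=1}^m w_j\lambda_j\colon \Delta_n\to\Delta_n$. This is piecewise linear and fixes faces setwise, hence surjective by the path-following argument of Section~\ref{sec:algorithm}, so there is a point $x\in\Delta_n$ with $\lambda(x)$ equal to the barycenter $b = \frac{1}{n+1}\sum_{i=1}^{n+1} e_i$. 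Since $\lambda$ preserves $\partial\Delta_n$ while $b$ lies in the interior, $x$ lies in the interior of $\Delta_n$; let $\tau$ be a facet of $T$ containing $x$.

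It remains to verify that $\lambda_j$ exhibits at least $k_j$ labels on $\tau$ for each $j$, and here I would argue by contradiction exactly as in the proof of Theorem~\ref{thm:secret}. Suppose some $\lambda_{j_0}$ exhibits on the vertices of $\tau$ only labels from a set $S$ with $|S|\le k_{j_0}-1$. Apply the linear functional $z\mapsto\langle z,\sum_{i\in S}e_i\rangle$: for each vertex $v$ of $\tau$ one has $\langle\lambda_{j_0}(v),\sum_{i\in S}e_i\rangle = 1$ (the vertex $\lambda_{j_0}(v)$ equals some $e_i$ with $i\in S$), while the corresponding term for every other $\lambda_l$ is nonnegative, so $\langle\lambda(v),\sum_{i\in S}e_i\rangle \ge w_{j_0}$. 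Because $\lambda$ is affine on $\tau$ and $x$ is a convex combination of the vertices of $\tau$, the same inequality holds at $x$, giving $\frac{|S|}{n+1} = \langle b,\sum_{i\in S}e_i\rangle \ge w_{j_0} > \frac{k_{j_0}-1}{n+1}$, whence $|S|\ge k_{j_0}$, contradicting $|S|\le k_{j_0}-1$.

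I expect the only genuine subtlety to be the bookkeeping that produces the weights: the constraint $\sum k_j = n+m$ is exactly what leaves the slack $\frac{1}{n+1}$ needed for the strict inequalities $w_j>\frac{k_j-1}{n+1}$, and it is worth recording along the way that the same constraint forces $k_j\le n+1$ for every $j$, so that ``$k_j$ distinct labels on an $n$-dimensional facet'' is never unachievable for trivial reasons. Everything else is a direct transcription of the second proof of Theorem~\ref{thm:secret}; in particular no genericity or perturbation of the target point is needed, since $\lambda$ is affine on each simplex of $T$ and we may aim it straight at the barycenter.
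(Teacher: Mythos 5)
Your proposal is correct and follows essentially the same route as the paper: your weights $w_j = \frac{k_j-1}{n+1} + \frac{1}{m(n+1)}$ are exactly the paper's $\alpha_j = \frac{1}{n+1}\bigl(k_j + \frac{1}{m} - 1\bigr)$, and the surjectivity of the face-preserving weighted average is used identically to land a simplex $\tau$ on the barycenter. The only (cosmetic) difference is the final verification, where you argue by contradiction with the linear functional $z \mapsto \langle z, \sum_{i\in S} e_i\rangle$, as in the proof of Theorem~\ref{thm:secret}, while the paper counts directly via the coefficients $\beta_{ij}$; the two are contrapositives of one another.
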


\begin{proof}[Proof:]
	Let $\alpha_j = \frac{1}{n+1}(k_j+\frac1m-1)$ for $1 \le j \le m$. Then since $\sum_j k_j =n+m$ we have that $\sum_j \alpha_j = 1$. 
	Thus $\lambda = \sum_j \alpha_j\lambda_j$ is a map $\Delta_n \longrightarrow \Delta_n$, and $\lambda$ satisfies $\lambda(\sigma) \subseteq \sigma$
	for each face $\sigma$ of~$\Delta_n$ as usual. Let $x \in \Delta_n$ with $\lambda(x) = (\frac{1}{n+1}, \dots, \frac{1}{n+1})$ and let $\tau$ be a facet of the 
	triangulation of $\Delta_n$ containing~$x$. Denote the vertices of $\tau$ by $v_0, \dots, v_n$ and let $x = \sum_i \mu_iv_i$ for nonnegative
	$\mu_i$ with $\sum_i \mu_i = 1$. Define for $i = 1, \dots, n+1$ and $j = 1, \dots, m$
	$$\beta_{ij} = \alpha_j\cdot\sum_{\{k | \lambda_j(v_k) = e_i\}} \mu_k.$$
	
	Since $\sum_i \mu_i = 1$ we have that $\sum_i \beta_{ij} = \alpha_j$ for every~$j$. The choice of $x$, definition of~$\lambda$, and piecewise linearity
	of the $\lambda_j$ imply that
	$$\left(\frac{1}{n+1}, \dots, \frac{1}{n+1}\right) = \lambda(x) = \sum_j \alpha_j\lambda_j(x) = \sum_j \alpha_j \sum_{k=0}^n \mu_k\lambda_j(v_k)$$
	and thus $\sum_j \beta_{ij} = \frac{1}{n+1}$. Since in particular $0 \le \beta_{ij} \le \frac{1}{n+1}$ and we already know that $\sum_i \beta_{ij} = \alpha_j$, 
	for each~$j$ the number of indices~$i$ such
	that $\beta_{ij} > 0$ is at least $\alpha_j(n+1) > k_j-1$. Now $\beta_{ij} > 0$ implies that there is a vertex $v$ of $\tau$ with $\lambda_j(v) = e_i$, 
	and thus $\tau$ receives at least $k_j$ distinct labels by~$\lambda_j$.
\end{proof}

\section*{Acknowledgments}
\label{sec:ack}
	Figure~\ref{spernerlabeling} and Figure~\ref{labelwithdoors} are reproduced with permission from~\cite{houston-edwards2017} 
	and copyright 2017, PBS Infinite Series and Ray Lux.



\small
\bibliographystyle{amsplain}

\begin{thebibliography}{1}

\bibitem{asada2017}
Megumi Asada, Florian Frick, Vivek Pisharody, Maxwell Polevy, David Stoner,
  Ling~Hei Tsang, and Zoe Wellner, \emph{{Fair division and generalizations of
  Sperner- and KKM-type results}}, arXiv preprint arXiv:1701.04955 (2017).
  
\bibitem{babson2012}
Eric Babson, \emph{{Meunier Conjecture}}, arXiv preprint arXiv:1209.0102 (2012).

\bibitem{cohen1967}
Daniel I.~A. Cohen, \emph{On the Sperner lemma}, J. Combin. Theory \textbf{2}
  (1967), no.~4, 585--587.

\bibitem{houston-edwards2017}
Kelsey Houston-Edwards, \emph{{Splitting Rent with Triangles}}, 2017, Episode
  of {\emph{PBS Infinite Series}}. Video:
  {\url{https://www.youtube.com/watch?v=48oBEvpdYSE}}.

\bibitem{kuhn1968}
Harold~W. Kuhn, \emph{Simplicial approximation of fixed points}, Proc. Natl.
  Acad. Sci. \textbf{61} (1968), no.~4, 1238--1242.

\bibitem{sperner1928}
Emanuel Sperner, \emph{{Neuer Beweis f{\"u}r die Invarianz der Dimensionszahl
  und des Gebietes}}, Abh. Math. Seminar Univ. Hamburg, vol.~6, Springer, 1928,
  pp.~265--272.

\bibitem{su1999}
Francis~E. Su, \emph{Rental harmony: Sperner's lemma in fair division},
  Amer. Math. Monthly \textbf{106} (1999), no.~10, 930--942.

\end{thebibliography}

\end{document}